\newcommand{\ignore}[1]{}
\DeclareMathOperator{\ad}{ad}
\newcommand{\F}{\mathbb F}
\newtheorem{dummy}{Dummy}
\newtheorem{lemma}[dummy]{Lemma}
\newtheorem*{lemma*}{Lemma}
\newtheorem{theorem}[dummy]{Theorem}
\theoremstyle{definition}
\theoremstyle{remark}
\newtheorem{rem}[dummy]{Remark}
\begin{document}

\bibliographystyle{alpha}
\author{Sandro Mattarei}
\title[Constituents and chain lenghts]{Constituents of graded Lie algebras\\ of maximal class\\ and chain lengths of thin {Lie} algebras}

\begin{abstract}
Thin Lie algebras are infinite-dimensional graded Lie algebras $L=\bigoplus_{i=1}^{\infty}$,
with $\dim(L_1)=2$ and satisfying a {\em covering property:}
for each $i$, each nonzero $z\in L_i$ satisfies $[zL_1]=L_{i+1}$.
It follows that each homogeneous components $L_i$ is either one- or two-dimensional,
and in the latter case is called a {\em diamond}.
Hence $L_1$ is a diamond, and if there are no other diamonds then $L$ is a graded Lie algebra of maximal class.

We present simpler proofs of some fundamental facts on graded Lie algebras of maximal class, and on thin Lie algebras, based on a uniform method,
with emphasis on a polynomial interpretation.
Among else, we determine the possible values for the most fundamental parameter of such algebras,
which is the dimension of their largest metabelian quotient.
\end{abstract}
\subjclass[2010]{primary 17B50; secondary  17B70, 17B65}
\keywords{Modular Lie algebra, graded Lie algebra, thin Lie algebra}
\maketitle

\section{Introduction}\label{sec:intro}

A {\em graded Lie algebra of maximal class} is a graded Lie algebra $L=\bigoplus_{i=1}^{\infty}L_i$
with $\dim L_1=2$, $\dim L_i\le 1$ for $i>1$, and $[L_i,L_1]=L_{i+1}$ for all $i$.
Their name refers to the fact that, in case $1<\dim(L)<\infty$, such Lie algebras are nilpotent
of nilpotency class as large as it can be compared to their dimension, hence precisely one less than their dimension.
Taking the latter as definition (and without an assumption of being graded), Lie algebras of maximal class are also referred to as
{\em alg\'{e}bres filiformes} in the literature.
The general wisdom is that those are too complicated to ever admit a comprehensive classification,
even in characteristic zero.
A reasonable approach in their study is shifting focus to to the graded Lie algebra associated with a suitable filtration,
the most natural being the lower central series.
The resulting graded Lie algebras of maximal class are then those of our definition.
Over a field a characteristic zero, taking the associated graded Lie algebra is too drastic a simplification,
as it is not difficult to see that there then at most two nonisomorphic graded Lie algebras of maximal class of each given finite dimension $\dim(L)$.
However, it was noted by Shalev in~\cite{Sha:max} that the landscape might look more complicated in positive characteristic,
as he constructed countably many insoluble (infinite-dimensional) graded Lie algebras of maximal class.

In this paper we make the simplifying assumption that all Lie algebras considered are infinite-dimensional,
whence graded Lie algebras of maximal class satisfy $\dim(L_i)=1$ for $i>1$.
It will be clear that each of our results actually remains valid in the finite-dimensional case
under an assumption that $\dim(L)$ is sufficiently large.
A systematic investigation of graded Lie algebras of maximal class started in~\cite{CMN}.
In particular, constructions of new such algebras from a given one were described,
which when done repeatedly together with some limit processes produce,
over an arbitrary field $F$ of positive characteristic, $\max\{|F|,\aleph_0\}$ pairwise non-isomorphic graded Lie algebras of maximal class.
A classification of graded Lie algebras of maximal class was achieved in~\cite{CN} for $p$ odd, and in~\cite{Ju:maximal} for $p=2$,
in the sense of proving that any graded Lie algebras of maximal class can be obtained through the procedures described in~\cite{CMN}.

Important quantities associated with a graded Lie algebra of maximal class are the {\em constituent lengths}.
Those numbers can intrinsically be defined in terms of relative codimensions of the Lie powers of $L^2$, namely,
$\ell=\ell_1=\dim\bigl(L^2/(L^2)^2\bigr)+1$
and
$\ell_r=\dim\bigl((L^2)^r/(L^2)^{r+1}\bigr)$ for $r>1$.
In particular, the starting point of investigating graded Lie algebras of maximal class is establishing that $\ell$
must equal either $\infty$ or twice a power of the characteristic, say $\ell=2q$ with $q>1$ a power of $p$.
We will revisit the original proof of that result given in~\cite{CMN},
but before discussing that we introduce another class of graded Lie algebras,
which relate to graded Lie algebras of maximal class in various ways.

A {\em thin} Lie algebra is an infinite-dimensional (as we assume in this paper) graded Lie algebra $L=\bigoplus_{i=1}^{\infty}L_i$
with $\dim L_1=2$ and satisfying the following {\em covering property:}
for each $i$, each nonzero $z\in L_i$ satisfies $[zL_1]=L_{i+1}$.
This implies at once that homogeneous components of a thin Lie algebra are at most two-dimensional.
Those components of dimension two are called {\em diamonds,}
hence $L_1$ is a diamond, and if there are no other diamonds then $L$ is a graded Lie algebra of maximal class.
It is convenient, however, to explicitly exclude graded Lie algebras of maximal class from the definition of thin Lie algebras.
Thus, a thin Lie algebra must have at least one further diamond past $L_1$, and we let $L_k$ be the earliest (the {\em second} diamond).

The term {\em diamond} originates from a lattice-theoretic characterization of thin Lie algebras motivated by~\cite{Br}.
In fact, any graded ideal $I$ of a thin Lie algebra is comprised between two consecutive Lie powers of $L$, in the sense that $L^i\supseteq I\supseteq L^{i+1}$ for some $i$,
and hence the lattice of graded ideals looks like a sequence of diamonds (a name for the lattice of subspaces of a two-dimensional space) connected by {\em chains}.
We will not formally assign numerical lengths to those chains but it should be clear that they are important in describing the structure of a thin Lie algebras.
Knowing those lengths amounts to knowing the degrees in which the diamonds occur.

In particular, the most fundamental invariant of a thin Lie algebra is the degree $k$ of the second diamond.
It turns out (as was proved in~\cite{AviJur}, but see Section~\ref{sec:second_diamond} for further information)
that $k$ can only be one of $3$, $5$, $q$, or $2q-1$, where $q$ is a power of the characteristic when positive.
In particular, only $3$ and $5$ can occur in characteristic zero,
and subject to a restriction such thin Lie algebras (infinite-dimensional as we assume throughout) were shown to belong to precisely three isomorphism types~\cite{CMNS},
associated to $p$-adic Lie groups of types $A_1$ and $A_2$~\cite{Mat:thin-groups}.
In contrast, the values $q$ and $2q-1$ for $k$ occur for two broad classes of thin Lie algebras built from certain nonclassical finite-dimensional
simple modular Lie algebras, and also to thin Lie algebras obtained from graded Lie algebras of maximal class through various constructions.
Overall discussions of those two classes of thin Lie algebras, with references, can be found in~\cite{AviMat:A-Z} and~\cite{CaMa:Hamiltonian}, respectively.

The occurrence of powers $q$ of the characteristic $p$ in these results can ultimately be traced,
not unexpectedly, to the fact that $(X+1)^n=X^n+1$ in $\F_p[x]$ precisely when $n$ is a power of $p$,
or generalizations of that basic fact which we discuss in Section~\ref{sec:preli}.
One of the goals of this paper is to provide a revised and simplified exposition
of the determination of the fundamental parameters $\ell$ and $k$ of graded Lie algebras of maximal class
and thin Lie algebras, emphasizing the polynomial viewpoint.
We deal with the length $\ell$ of the first constituent in Section~\ref{sec:constituents},
and with the degree $k$ of the second diamond in Section~\ref{sec:second_diamond}.
One of the reasons for this approach is removing much of the educated guessing in the choice of
particular Lie product calculations rather than others in the original proofs,
which was developed after reliance on the results of computer calculations.
Our proofs are generally shorter than the original proofs, but more importantly,
they are more natural in the sense that after a minimal setup the proofs themselves produce the correct statement to be proved.
Isolating the key reasons for the admissible values for $\ell$ and $k$ from accessory calculations
makes the arguments more flexible for use in other settings.

In particular, one such setting which has been only partially investigated
is the study of other types of graded Lie algebras of maximal class, which are not generated by $L_1$,
but by an element of degree $1$ and one of degree $n>1$.
For $n=2$ those were classified by Shalev and Zelmanov in~\cite{ShZe:narrow-Witt} in characteristic zero,
and by Caranti and Vaughan-Lee in~\cite{CVL00,CVL03} in positive characteristic (odd and then two).
Partial results for arbitrary $n$ where then found by Ugolini~\cite{Ugo:thesis} (see also~\cite{Ugo:type_n}),
and a classification for $n$ equal to the characteristic $p$ was obtained in~\cite{Sca:thesis}.
An improved exposition of part of the results of~\cite{Sca:thesis} is given in~\cite{IMS}.
In that paper, the determination of the possibilities for the {\em first constituent length}
has greatly benefitted from the {\em polynomial} approach introduced here.

The author is grateful to Marina Avitabile for discussions on an early draft of this document which was privately circulated since 2005.
That preliminary, unrefined and longer version, was later cited in~\cite{AviMat:A-Z,AviMat:diamonds,AJM} under the same title,
but included further, essentially disjoint material, which eventually became~\cite{Mat:sandwich}.
Some of those citations refer to the content of~\cite{Mat:sandwich} rather than this paper.

\section{Preliminaries}\label{sec:preli}

As announced in the Introduction, for clarity of exposition we assume all Lie algebras in this paper to have infinite dimension.
Allowing their dimension to be finite but large enough would suffice, and inspection of our proofs would reveal how large is precisely enough,
but we would not be able to improve on the original dimension bounds in~\cite{CMN} and~\cite{AviJur}, which were sharp in each case.
We allow the characteristic $p$ to be zero, although most of the interest lies in positive characteristic.

We use the left-normed convention
for iterated Lie products, hence $[abc]$ stands for $[[ab]c]$.
We also use the shorthand
$[ab^i]=[ab\cdots b]$,
where $b$ occurs $i$ times.
A fundamental calculation device in most arguments, both in~\cite{CMN,CN,Ju:maximal} and in a number of papers on thin Lie algebras,
is the following {\em generalized Jacobi identity,}
which can be easily proved by induction:
\begin{equation}\label{eq:iterated-Jacobi}
[v[y z^j]]
=\sum_{i=0}^{j}(-1)^i \binom{j}{i}
[vz^iy z^{j-i}].
\end{equation}
In a typical application of Equation~\eqref{eq:iterated-Jacobi}
most of the summands will vanish,
usually because $[vz^iy]=0$ except for a few values of $i$ (at most two in this paper).

Because of the recurrent use of Equation~\eqref{eq:iterated-Jacobi}, in all papers in this research area
frequent use is made of Lucas' theorem, a basic tool for evaluating a binomial coefficient $\binom{a}{b}$ modulo a prime $p$:
if $a$ and $b$ are non-negative integers with $p$-adic expansions $a=a_0+a_1p+\cdots+a_rp^r$ and $b=b_0+b_1p+\cdots+b_rp^r$, then
$
\binom{a}{b}= \prod_{i=0}^{r} \binom{a_i}{b_i} \pmod{p}.
$
Lucas' theorem is easily proved by expanding both sides of
$
(1+X)^a=\prod_{i=0}^{r}(1+X^{p^i})^{a_i}
$
in the polynomial ring $\F_p[X]$.
As an example of application of Lucas' theorem, the simple fact, mentioned in the Introduction, that the condition
$(X+1)^n=X^n+1$ in $\F_p[X]$
entails that $n$ is a power of $p$,
follows by reading the $p$-adic expansion of $n$ from the vanishing modulo $p$ of the selected binomial coefficients
$\binom{n}{p^r}\equiv 0\pmod{p}$ for $p^r<n$.

In this paper we deliberately {\em avoid} using Lucas' theorem for the most part.
As an example, deducing that $n$ is a power of $p$ from the equation
$(X+1)^n=X^n+1$ in $\F_p[X]$
can be done without direct appeal to Lucas' theorem (even though, admittedly, in a less elementary way):
that condition implies that the map $\alpha\mapsto\alpha^n$ is an automorphism of any finite field of characteristic $p$,
whence it must equal some power of the Frobenius automorphism $\alpha\mapsto\alpha^p$.

In our proof that the first constituent length $\ell$ of a graded Lie algebra of maximal class $L$ must equal to twice a power of $p$,
in Theorem~\ref{thm:first_constituent}, we will use the following variation:
if $(1+X)^{2n}\equiv 1\pmod{X^n}$ in $\F_p[X]$, then $n$ equals a power of $p$.
In fact given that $\binom{2n}{k}$ is a multiple of $p$ for $0<k<n$,
a simple application of Lucas' theorem (using $\binom{k}{p^r}\equiv 0\pmod{p}$ for $p^r<n$) yields the conclusion.
However, the following simple manipulations avoid direct appeal to Lucas' theorem.
Because of the symmetry $\binom{2n}{2n-k}=\binom{2n}{k}$, we are actually given that $(X-1)^{2n}=X^{2n}+aX^n+1$ in $\F_p[X]$, for some $a\in\F_p$.
Substituting $1$ for $X$ we get $a=-2$, whence $(X-1)^{2n}=(X^n-1)^2$.
Consequently, $(X-1)^n=X^n-1$, and hence $n$ is a power of $p$.

In Theorem~\ref{thm:first_chain} we will show that the degree $k$ of the second diamond in a thin Lie algebra,
which can easily be seen to be odd, can only be $3$, $5$, $q$, or $2q-1$, for some power $q$ of $p$.
In that case, after some preparations, an application of Equation~\eqref{eq:iterated-Jacobi}
will give us a congruence involving binomial coefficients, which in turn
amounts to the condition
\begin{equation}\label{eq:congr_2n+1}
(X+1)^{2n+1}(1-nX)\equiv 1+(n+1)X\pmod{X^n}
\end{equation}
in the polynomial ring $\F_p[X]$, where $k=2n+1$.
The following result will then give almost the desired conclusion, and in~Remark~\ref{rem:neater} we discuss how the spurious values will be excluded.

\begin{lemma}\label{lemma:first_chain}
Let $p$ be a prime or zero, and suppose the odd integer $2n+1>1$ satisfies
\[
\binom{2n+1}{j+1}\equiv n\binom{2n+1}{j}\pmod{p}
\quad\text{for $0<j<n-1$.}
\]
Then $2n+1$ equals $3$, $5$, $7$, $q$, $2q-1$, or $2q+1$, for some power $q$ of $p$.
\end{lemma}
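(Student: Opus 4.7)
The plan is to rewrite the hypothesis as the polynomial congruence
\[
(1+X)^{2n+1}(1-nX)\equiv 1+(n+1)X\pmod{X^n}
\]
in $\F_p[X]$, letting $P(X)$ denote the difference of the two sides, and then exploit $P$ by differentiation, factoring, and a Lucas-style analysis of sparse polynomials. The small cases $n\le 3$ give $2n+1\in\{3,5,7\}$ for free since the hypothesis is vacuous for $n\le 2$ and reduces to the identity $\binom{7}{2}=3\binom{7}{1}$ for $n=3$, so from now on I assume $n\ge 4$.

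A direct calculation gives
\[
P'(X)=(n+1)\bigl[(1+X)^{2n}(1-2nX)-1\bigr]
\quad\text{and}\quad
P''(X)=-2n(n+1)(2n+1)\,X(1+X)^{2n-1}.
\]
Since $P\equiv 0\pmod{X^n}$, also $P''\equiv 0\pmod{X^{n-2}}$, and the coefficient of $X^1$ (available because $n\ge 4$) forces $2n(n+1)(2n+1)\equiv 0\pmod p$. For $p=0$ this is impossible, closing that case; for odd $p$, exactly one of $n$, $n+1$, $2n+1$ is divisible by $p$, since any two of them sharing $p$ would force $p\mid 1$.

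The three odd-$p$ subcases are handled uniformly by iterating the hypothesis to read off $(1+X)^{2n+1}\bmod X^n$ and then pushing the congruence one more degree. If $p\mid 2n+1$, iteration forces $\binom{2n+1}{k}\equiv 0\pmod p$ for $1\le k\le n-1$; the palindrome property of $(1+X)^{2n+1}$ pins it down to $1+b(X^n+X^{n+1})+X^{2n+1}\pmod p$, and differentiating (using $p\nmid n$) gives $b=0$, so $2n+1=q$. If $p\mid n$, iteration gives $(1+X)^{2n+1}\equiv 1+X\pmod{X^n}$; factoring out the unit $1+X$ yields $(1+X)^{2n}\equiv 1\pmod{X^n}$, whence $n=q$ by the result from Section~\ref{sec:preli}, and $2n+1=2q+1$. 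If $p\mid n+1$, iteration gives $(1+X)^{2n+1}\equiv\sum_{k=0}^{n-1}(-X)^k\pmod{X^n}$, and multiplication by $1+X$ collapses the geometric sum to $(1+X)^{2n+2}\equiv 1\pmod{X^n}$. I would then invoke that the smallest positive $k$ with $p\nmid\binom{N}{k}$ is $p^s$, where $s$ is the position of the lowest nonzero digit of $N$ in base $p$; applied to $N=2n+2$ this forces $p^s\ge n$, and writing $N=mp^s$ gives $m\le(2n+2)/n<3$, so $m\in\{1,2\}$. Oddness of $p$ rules out $m=1$, leaving $N=2p^s$, i.e., $n+1=q$ and $2n+1=2q-1$.

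The characteristic $p=2$ needs separate treatment because the pivot $2n(n+1)(2n+1)\equiv 0\pmod 2$ is automatic. I would split on the parity of $n$: for $n$ even the hypothesis collapses in $\F_2[X]$ to $(1+X)^{2n+1}\equiv 1+X\pmod{X^n}$, and the $p\mid n$ step above gives $2n+1=2q+1$; for $n$ odd it collapses to $(1+X)^{2n+2}\equiv 1\pmod{X^n}$, and the same Lucas argument, now cleaner because $(1+X)^{2n+2}=(1+X^2)^{n+1}$ in $\F_2[X]$, forces $n+1$ to be a power of $2$, giving $2n+1=2q-1$. The main obstacle is the case $p\mid n+1$ (and its $p=2$, $n$ odd counterpart): the derived congruence is one degree short of what the earlier polynomial lemma needs off-the-shelf, and the sparse-polynomial digit analysis sketched above is the only place where explicit $p$-adic digit information has to be used.
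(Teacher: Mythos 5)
Your proof is correct, and its skeleton coincides with the paper's: the same polynomial congruence $(1+X)^{2n+1}(1-nX)\equiv 1+(n+1)X\pmod{X^n}$, the same trichotomy $n\equiv 0,-1,-1/2\pmod p$ extracted from the degree-$3$ coefficient (your closed form $P''=-2n(n+1)(2n+1)X(1+X)^{2n-1}$ is a tidy repackaging of the paper's $j=2$ computation, and it dispatches characteristic zero identically), and the same derivative trick in the case $p\mid 2n+1$. Where you genuinely diverge is in the cases $p\mid n+1$ and $p=2$: the paper exploits the palindromic symmetry $\binom{N}{k}=\binom{N}{N-k}$ to extend the vanishing of coefficients past the midpoint, pins down $(X+1)^{2n+2}$ up to two unknown coefficients, and kills the off-centre one by differentiating a $p$-th power, thereby upgrading the congruence to the modulus $X^{n+1}$ needed for the Section~2 criterion. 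You instead accept the congruence $(1+X)^{2n+2}\equiv 1\pmod{X^n}$ as is and finish with the Lucas-type fact that the least positive $k$ with $p\nmid\binom{N}{k}$ is $p^s$ with $s$ the lowest nonzero base-$p$ digit of $N$, so that $2n+2=Mp^s$ with $p^s\ge n$ forces $M=2$. Both routes are valid; yours is arguably shorter at that point and avoids the symmetry bookkeeping, at the cost of invoking Lucas' theorem, which the paper makes a stylistic point of circumventing (it wants arguments that transfer to settings where only sparse-polynomial manipulations are available). Your separate parity split in characteristic $2$, reducing to $(1+X^2)^{n+1}\equiv 1\pmod{X^n}$, is likewise correct and slightly cleaner than folding $p=2$ into the general case as the paper does.
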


\begin{proof}
Note that the congruence holding in the stated range is equivalent to Equation~\eqref{eq:congr_2n+1}.
The congruence is trivially satisfied for $j=1$.
No further values of $j$ belong to the given range unless $2n+1>7$, which we assume now.
For $j=2$ and $p\neq 2,3$ the congruence is equivalent to $n(2n+1)(-n-1)\equiv 0\pmod{p}$, whence $n\equiv 0,-1/2,-1\pmod{p}$.
We consider each case in turn, thus covering $p=2$ and $p=3$ as well.

If $p\neq 2$ and $n\equiv -1/2\pmod{p}$,
then $\binom{2n+1}{1}\equiv\binom{2n+1}{2}\equiv 0\pmod{p}$, and inductively we obtain
$\binom{2n+1}{j}\equiv 0\pmod{p}$
for $0<j<n$.
Because of the symmetry $\binom{2n+1}{2n+1-j}=\binom{2n+1}{j}$ this also holds for $n+1<j<2n+1$, and so
$(X+1)^{2n+1}=X^{2n+1}+aX^{n+1}+aX^n+1$ in $\F_p[X]$, for some $a\in\F_p$.
Since this polynomial is a $p$-th power (as the exponent $2n+1$ is a multiple of $p$) its derivative
$a(n+1)X^{n+1}+anX^n$ is the zero polynomial, whence $a=0$.
Therefore, $(X+1)^{2n+1}=X^{2n+1}+1$ in $\F_p[X]$,
and as we have discussed earlier this implies $2n+1=q$, a power of $p$.

For the other two cases we write our condition in the equivalent form
\[
[X^j](X+1)^{2n+1}(nX-1)=0,
\quad\text{for $1<j<n$,}
\]
where the polynomial is viewed in $\F_p[X]$.
Here $[X^j]f(X)$ stands for the coefficient of $X^j$ in the polynomial $f(X)$.

Thus, if $n\equiv -1\pmod{p}$ our condition means
\[
[X^j](X+1)^{2n+2}=0,
\quad\text{for $1<j<n$,}
\]
and clearly also for $j=1$.
Hence
$(X+1)^{2n+2}=X^{2n+2}+aX^{n+2}+bX^{n+1}+aX^n+1$ in $\F_p[X]$, for some $a,b\in\F_p$.
Since this polynomial is a $p$-th power its derivative $aX^{n+1}-aX^{n-1}$ is the zero polynomial, and hence $a=0$.
As we have seen earlier this forces $2n+2$ to be twice a power of $p$, and so $2n+1=2q-1$.

Finally, if $n\equiv 0\pmod{p}$ our condition becomes
\[
[X^j](X+1)^{2n+1}=0,
\quad\text{for $1<j<n$.}
\]
Again by symmetry this also holds for $n+1<j<2n+1$, hence
$(X+1)^{2n+1}=X^{2n+1}+X^{2n}+aX^{n+1}+aX^n+X+1=(X^{2n}+aX^n+1)(X+1)$,
for some $a\in\F_p$,
and so
$(X+1)^{2n}=X^{2n}+aX^n+1$.
Hence $2n$ is twice a power of $p$, and so $2n+1=2q+1$.
\end{proof}

\begin{rem}\label{rem:neater}
A somehow neater version of Lemma~\ref{lemma:first_chain} would assume that the congruence holds in the extended range $0<j<n$.
For $p\neq 2$ that would rule out the cases where $2n+1$ equals $7$ or $2q+1$,
thus producing a sharp conclusion in our application to thin Lie algebras in Section~\ref{sec:second_diamond},
where $k=2n+1$ will be the degree of the second diamond.
Unfortunately, our Lie algebraic calculations of Section~\ref{sec:second_diamond} naturally lead to the hypothesis of Lemma~\ref{lemma:first_chain} as stated,
and those spurious values for $k$ will have to be excluded by other means.
In fact, Equation~\eqref{eq:diamond} and the discussion which follows it implies that $(k-1)/2$ cannot be a multiple of $p$
(or the covering property will be violated).
That will exclude $k=2q+1$,
and $k=7$ will be ruled out in an ad-hoc manner (for $p\neq 2,7$) in the proof of Theorem~\ref{thm:first_chain}.
\end{rem}

In the proof of Theorem~\ref{thm:any_constituent} we will make use of the following congruence, which holds
if $q$ is a power of the prime $p$:
\begin{equation}\label{eq:binomial}
(-1)^{a}\binom{a}{b}\equiv
(-1)^{b}\binom{q-1-b}{q-1-a}
\pmod{p}
\quad\textrm{for $0\le b\le a<q$.}
\end{equation}
When $q=p$ this is an easy application of Wilson's theorem $(p-1)!\equiv -1\pmod{p}$.
A proof of the general case based on different manipulations can be found in~\cite[Section~4]{Mat:binomial}.
We give here a different and perhaps more conceptual proof of Equation~\eqref{eq:binomial}.
Because $(-1)^{a}\binom{a}{0}=(-1)^{a}$,
$(-1)^{a}\binom{a}{a}=(-1)^{a}$, and
$(-1)^{q-1}\binom{q-1}{b}\equiv (-1)^{b}\pmod{p}$,
on the boundary of the triangular region
$0\le b\le a<q$
under consideration
the residue modulo $p$ of the signed binomial coefficient
$(-1)^{a}\binom{a}{b}$
takes alternate values $1$ and $-1$.
In particular, those boundary values are invariant under any of the six symmetries of this triangular region.
But then the value modulo $p$ of $(-1)^{a}\binom{a}{b}$ must likewise be invariant in the interior of the region,
being uniquely determined by the boundary values and the equation
\[
(-1)^{a+1}\binom{a+1}{b+1}+(-1)^{a}\binom{a}{b}+(-1)^{a}\binom{a}{b+1}=0,
\]
which is a slightly more symmetric version of the most basic binomial identity $\binom{n+1}{k+1}=\binom{n}{k}+\binom{n}{k+1}$.
In conclusion, the value modulo $p$ of $(-1)^{a}\binom{a}{b}$ can be written in six different ways
according to the six symmetries of the triangular shape.
Because the group is generated by any two reflections, each of those six expressions can be obtained
by suitable repeated application of $(-1)^{a}\binom{a}{b}=(-1)^{a}\binom{a}{a-b}$ and Equation~\eqref{eq:binomial}.

\section{Constituents of graded Lie algebras of maximal class}\label{sec:constituents}

Because of our blanket infinite-dimensionality assumption, in this paper a graded Lie algebras of maximal class is a graded Lie algebra $L=\bigoplus_{i=1}^{\infty}L_i$
with $\dim L_1=2$, $\dim L_i=1$ for $i>1$, and $[L_i,L_1]=L_{i+1}$ for all $i$.
The study of graded Lie algebras of maximal class in~\cite{CMN,CN,Ju:maximal} relied on
associating an infinite sequence to any such algebra $L$,
the {\em sequence of two-step centralizers} of $L$,
which essentially describes the adjoint action of $L_1$ on a graded basis of $L$.
The elements of the sequence are the centralizers $C_{L_1}(L_i)$, which are one-dimensional subspaces of $L_1$.
Therefore, they can be parametrized by points on a projective line,
and by scalars in the underlying field plus a symbol $\infty$ once a choice of a choice of homogeneous generators for $L$ is made.
There is a natural way to split the sequence into a union of adjacent finite sequences called {\em constituents}.
For the purposes of this paper we will only need to know about their lengths, whose definition we recall below.
As we mentioned in the Introduction, constituent lengths have an intrinsic definition
in terms of relative codimensions in the sequence of Lie powers $(L^2)^r$ of $L^2$.
A proof that this is equivalent to the one we give below
is given in~\cite[Section~5]{IMS} in a more general setting.

Suppose $L$ is a non-metabelian (infinite-dimensional) graded Lie algebra of maximal class.
Hence if $C_{L_1}(L_2)=\F y$ is the first two-step centralizer, then there is a component $L_i$
not centralized by $y$, say $L_{\ell}$ is the earliest.
The parameter $\ell$ is {\em the first constituent length}.
As is customary, we then set $C_{L_1}(L_{\ell})=\F x$.
Thus, we have
\begin{subequations}
\begin{align}
[yx^iy]&=0\quad\text{for $0\le i<\ell-1$},
 \label{eq:first_const}\\
[yx^{\ell}]&=0.
 \label{eq:first_const_end}
\end{align}
\end{subequations}
So far the only information one can obtain on $\ell$ is that it must be even, otherwise
\begin{equation}\label{eq:ell_even}
0=[yx^{(\ell-1)/2}\,[yx^{(\ell-1)/2}]]
=(-1)^{(\ell-1)/2}[yx^{\ell-1}y]
\end{equation}
yields a contradiction.

Proceeding further, the calculation
\begin{equation}\label{eq:2k}
0=[yx^{\ell-1}\,[yx^{\ell-1}]]=
[yx^{\ell-1}yx^{\ell-1}],
\end{equation}
together with our blanket assumption that $L$ is infinite-dimensional,
shows
that there must be another homogeneous component past $L_{\ell}$ which is not centralized by $y$,
say $L_{\ell+\ell_2}$ is the earliest.
We call $\ell_2$ {\em the second constituent length}.
Equation~\eqref{eq:2k} implies that $\ell_2<\ell$.

Analyzing the first and second constituents of $L$ will be sufficient to deduce that $\ell$ must equal twice some power of the characteristic
(which is therefore positive), as we state formally in Theorem~\ref{thm:first_constituent}.
However, because we will consider arbitrary constituents in Theorem~\ref{thm:any_constituent}, we set up the following notation for convenience.
We set
$v_1=[yx^{\ell-1}]$,
and for $r>1$ we let positive integers $\ell_r$ and homogeneous elements
$v_r$ of $L$ be recursively defined by
\begin{subequations}
\begin{align}
&[v_{r-1}yx^iy]=0\quad\text{for $0\le i<\ell_r-1$},
 \label{eq:const}\\
&[v_ry]\neq 0,\quad\text{where }
v_r:=[v_{r-1}yx^{\ell_r-1}].
 \label{eq:const_end}
\end{align}
\end{subequations}
Note that we are not asserting anything about $[v_rx]$ for $r>1$, which may vanish or not (and then be a scalar multiple of $[v_ry]$).
The positive integers $\ell_r$ (where we may view $\ell_1=\ell$), are the {\em constituent lengths} of $L$.
No further constituent length can exceed the first,
because if it were $\ell_r>\ell$ then we would get a contradiction by computing
\[
0=[v_{r-1}[yx^{\ell}]]
=\sum_{i=0}^{\ell}(-1)^i \binom{\ell}{i}
[v_{r-1}x^iy x^{\ell-i}]
=[v_{r-1}yx^{\ell}].
\]
Hence we have $\ell_r\le\ell$ for all $r$, and actually $\ell_2<\ell$ because of Equation~\eqref{eq:2k}.
Note that this shows, recursively, that such positive integers $\ell_r$ actually exist.

We also have $\ell_r>1$, because at least one of each pair of consecutive two-step centralizers equals $y$;
this follows from the following calculation as in~\cite[Lemma~3.3]{CMN}:
if $u$ is a homogeneous element of $L$ such that $[ux]=v_r$, then
\begin{equation}\label{eq:vyy}
0=[u[xyy]]=[uxyy]-2[uyxy]+[uyyx]=[uxyy]=[v_ryy].
\end{equation}
Incidentally, this conclusion for all $r$ implies $[Lyy]=0$, or $(\ad y)^2=0$.
In characteristic not two this can be expressed by saying that $y$ is a {\em sandwich element} of $L$.
Extending this fact, under certain conditions, to when $L$ is a thin Lie algebra, is the subject of~\cite{Mat:sandwich}.

We now show that $\ell/2$ is a lower bound for the length of any constituent.

\begin{lemma}\label{lemma:constituent_bound}
Let $L$ be a non-metabelian graded Lie algebra of maximal class, with constituent lengths $\ell=\ell_1,\ell_2,\ell_3,\ldots$
in the order of occurrence.
Then $\ell_r\ge \ell/2$ for all $r$.
\end{lemma}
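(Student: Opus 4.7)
I argue by contradiction. Since $\ell_1=\ell$ trivially satisfies the bound, suppose $\ell_r<\ell/2$ for some $r\ge 2$ and set $n:=\ell_r$. The plan is to evaluate the single bracket $B:=[v_{r-1}y,[yx^{2n-1}]]$ in two different ways, using in turn the $r$-th constituent relations \eqref{eq:const} and the first-constituent relations $[yx^iy]=0$ for $i<\ell-1$. The auxiliary element $w:=[yx^{2n-1}]$ is chosen because $2n-1<\ell-1$, so $w$ is a nonzero element of the first constituent; moreover $[y,w]=0$, since every summand in its Jacobi expansion contains a factor $[yx^iy]$ with $i\le 2n-1<\ell-1$.

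For the first expansion I apply \eqref{eq:iterated-Jacobi} directly to $B$: terms with $i<n-1$ vanish by \eqref{eq:const}, and after substituting $[v_{r-1}yx^{n-1+k}]=[v_rx^k]$ in the surviving range one obtains
\[
B=\sum_{k=0}^{n}(-1)^{k+n-1}\binom{2n-1}{k+n-1}\,[v_rx^kyx^{n-k}].
\]
For the second expansion, the three-term Jacobi identity combined with $[y,w]=0$ collapses to $B=-[y,[v_{r-1},w]]$. Expanding $[v_{r-1},w]$ by \eqref{eq:iterated-Jacobi} and using the scalar relation $[v_{r-1}x]=\beta[v_{r-1}y]$ (valid since both elements lie in the one-dimensional component $L_{\deg(v_{r-1})+1}$), one checks that every term carrying $\beta$ contains an inner factor $[v_{r-1}yx^{i-1}y]$ that vanishes in the relevant range by \eqref{eq:const}, so $\beta$ drops out. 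After reindexing one obtains a second expression for $B$ as a different linear combination of the same family $\{[v_rx^kyx^{n-k}]\}_{k=0}^{n}$, with coefficients now tied to $\binom{2n-1}{n-k}$.

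Equating the two expansions yields a nontrivial linear relation on the elements $[v_rx^kyx^{n-k}]$. Writing $[v_rx]=\gamma[v_ry]$ and invoking $[v_ryy]=0$ from \eqref{eq:vyy} kills the terms with $k\ge 1$, collapsing everything to a scalar multiple of $[v_ry]$. The main obstacle, which I expect to be where the real work lies, is then verifying that under the hypothesis $n<\ell/2$ the residual coefficient is a nonvanishing element of $\F_p$; I expect this to reduce to a congruence among the binomial coefficients $\binom{2n-1}{\cdot}$ in the spirit of Section~\ref{sec:preli}, essentially a statement about $(X+1)^{2n-1}$ modulo a suitable power of $X$, which the constraint $n<\ell/2$ obstructs. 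Once this polynomial obstruction is verified, the forced equation $[v_ry]=0$ contradicts \eqref{eq:const_end}, yielding $\ell_r\ge\ell/2$.
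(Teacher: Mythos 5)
Your proposal is a plan rather than a proof, and the step you yourself flag as ``where the real work lies'' is precisely the step that cannot be waved through. Beyond that central unverified claim, two of the simplifications you assert are false or unjustified as stated. First, in the second expansion the coefficient $\beta$ from $[v_{r-1}x]=\beta[v_{r-1}y]$ does \emph{not} drop out: the inner factors $[v_{r-1}yx^{i-1}y]$ vanish by \eqref{eq:const} only for $i<n$, while for $i\ge n$ they equal $[v_rx^{i-n}y]$ and survive, so the second expansion retains $\beta$-dependent terms. Second, $[v_rx]=\gamma[v_ry]$ together with \eqref{eq:vyy} kills only the $k=1$ term of the family $[v_rx^kyx^{n-k}]$; for $k\ge 2$ one gets $\gamma[v_ryx^{k-1}yx^{n-k}]$, whose vanishing depends on $\ell_{r+1}$, which is exactly the kind of information you do not control at this stage. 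Finally, even if everything collapsed to a single term, that term is a multiple of $[v_ryx^n]$, not of $[v_ry]$, and in a graded Lie algebra of maximal class $[v_ryx^n]=0$ does not force $[v_ry]=0$ (some later two-step centralizer may be $\F x$). So the intended contradiction does not follow even granting the coefficient computation.

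There is also a structural reason to distrust the route you chose. The paper proves Lemma~\ref{lemma:constituent_bound} by induction on $r$: from \eqref{eq:first_const} one extracts $[yx^j[yx^{j-1}]]=0$ for $0<j<\ell/2$ (Equation~\eqref{eq:diag}), and then, \emph{using the inductive hypothesis} $\ell_r\ge\ell/2$ to guarantee the needed range of $j$, a three-line Jacobi computation yields $[v_ry[yx^{j-1}]]=0$ for $0<j<\ell/2$, which combined with $[v_ry[yx^{\ell_{r+1}-1}]]=\pm[v_{r+1}y]\neq 0$ gives $\ell_{r+1}\ge\ell/2$. No binomial coefficients appear, and the paper explicitly remarks that the characteristic plays no role. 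Your argument, by contrast, skips the induction and funnels everything into a congruence among the $\binom{2n-1}{\cdot}$ modulo $p$; since the lemma holds in characteristic zero as well, any such congruence would have to be an identity over $\Z$, and you have not exhibited it. The missing induction is not cosmetic: the paper needs $\ell_r\ge\ell/2$ (the inductive hypothesis) to make its key identities valid on a long enough range of $j$, and your single-bracket computation has no substitute for that input.
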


\begin{proof}
Equation~\eqref{eq:first_const} and an application of the generalized Jacobi identity yield
$[yx^{i-1}[yx^{j-1}]]=0$
for $i,j>0$ with $i+j\le\ell$.
In particular, we have
\begin{equation}\label{eq:diag}
[yx^j[yx^{j-1}]]=0
\quad\text{for $0<j<\ell/2$}.
\end{equation}
We will use this to prove $\ell_r\ge \ell/2$ by induction on $r$.

Thus, assume $\ell_r\ge \ell/2$ holds for some $r\ge 1$, which is trivially true when $r=1$.
According to Equations~\eqref{eq:const} and~\eqref{eq:const_end}, together with the generalized Jacobi identity, for $0<j<\ell_r$ we have
$[v_{r-1}yx^{\ell_r-j-1}[yx^{j-1}]]=0$ and $[v_{r-1}yx^{\ell_r-j-1}[yx^j]]=(-1)^jv_r$.
In particular, because $\ell_r\ge \ell/2$ these hold for $0<j<\ell/2$, and in this range
together with Equation~\eqref{eq:diag} they imply
\begin{align*}
0
&=
[v_{r-1}yx^{\ell_r-j-1}\,[yx^j\,[yx^{j-1}]]]
\\&=
[v_{r-1}yx^{\ell_r-j-1}\,[yx^j]\,[yx^{j-1}]]
-[v_{r-1}yx^{\ell_r-j-1}\,[yx^{j-1}]\,[yx^j]]
\\&=
(-1)^j[v_ry\,[yx^{j-1}]].
\end{align*}
However, Equations~\eqref{eq:const} and~\eqref{eq:const_end} with $r$ increased by $1$, together with the generalized Jacobi identity, yield
\[
[v_ry\,[yx^{\ell_{r+1}-1}]]=(-1)^{\ell_{r+1}-1}[v_{r+1}y]\neq 0.
\]
We conclude $\ell_{r+1}\ge\ell/2$ as desired.
\end{proof}

Note that the characteristic of the field is not mentioned in the statement of Lemma~\ref{lemma:constituent_bound}, and plays no role in its proof.
Armed with Lemma~\ref{lemma:constituent_bound} we can now give a very concise proof of the following result,
which was originally Theorem~5.5 in~\cite{CMN} (and summarized the contents of of Lemmas 5.3 and 5.4 there).

\begin{theorem}\label{thm:first_constituent}
Let $L$ be a graded Lie algebra of maximal class, over a field of characteristic $p$.
Then the first constituent of $L$
has length $2q$
for some power $q$ of $p$.
Furthermore, if $p$ is odd then the second constituent has length $q$.
\end{theorem}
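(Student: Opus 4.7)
The plan is to derive from $[yx^\ell]=0$ together with the first-constituent vanishings $[yx^iy]=0$ for $0\le i<\ell-1$ a single polynomial congruence in $\F_p[X]$ that triggers the variation of the preliminary result at the end of Section~\ref{sec:preli}, forcing $\ell/2$ to be a power of $p$.

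The key calculation I would carry out is to evaluate the bracket $[[yx^a],v_1]$, where $v_1=[yx^{\ell-1}]$, in two different ways for each $a$ with $0\le a\le\ell-1$. On one side, since $[v_1,x]=0$, applying the generalized Jacobi identity to $[v_1,[yx^a]]$ collapses to the single term $[v_1yx^a]$. On the other side, expanding $[[yx^a],[yx^{\ell-1}]]$ via the generalized Jacobi identity around $v_1$, each summand $[yx^{a+i}yx^{\ell-1-i}]$ vanishes unless $a+i=\ell-1$ exactly: terms with $a+i\ge\ell$ die by $[yx^m]=0$ for $m\ge\ell$, while terms with $a+i<\ell-1$ die by the first-constituent relation. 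The surviving term is $(-1)^{\ell-1-a}\binom{\ell-1}{a}[v_1yx^a]$. Equating the two evaluations, using antisymmetry and the evenness of $\ell$, and exploiting the nonvanishing of $[v_1yx^a]$ for $0\le a\le\ell_2-1$, I would obtain
\[
\binom{\ell-1}{a}\equiv(-1)^a\pmod{p},\qquad 0\le a\le\ell_2-1.
\]

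This is equivalent to $(1+X)^{\ell-1}\equiv\sum_{a=0}^{\ell_2-1}(-1)^aX^a\pmod{X^{\ell_2}}$ in $\F_p[X]$. Multiplying by $1+X$, the cross-terms telescope and I am left with $(1+X)^\ell\equiv1\pmod{X^{\ell_2}}$. Since Lemma~\ref{lemma:constituent_bound} gives $\ell_2\ge\ell/2$, this in particular yields $(1+X)^\ell\equiv1\pmod{X^{\ell/2}}$, and the preliminary result from Section~\ref{sec:preli} forces $\ell/2=q$ to be a power of $p$; hence $\ell=2q$. For the second statement, in $\F_p[X]$ we have $(1+X)^{2q}=(1+X^q)^2=1+2X^q+X^{2q}$, so the congruence above becomes $2X^q+X^{2q}\equiv0\pmod{X^{\ell_2}}$; for $p$ odd, this forces $\ell_2\le q$, and combined with $\ell_2\ge q$ from Lemma~\ref{lemma:constituent_bound} I conclude $\ell_2=q$.

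The main obstacle will be spotting the right two-way evaluation in the first place: one must recognize that computing $[[yx^a],v_1]$ via the generalized Jacobi around each of its two arguments---using $[v_1,x]=0$ on one side and the complementary vanishings $[yx^iy]=0$ and $[yx^m]=0$ on the other---produces exactly one surviving summand on each side, so that a clean binomial identity emerges rather than a cascade of cross-terms that would be impossible to interpret.
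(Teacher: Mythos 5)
Your proof is correct and follows essentially the same route as the paper's: the paper expands $0=[yx^{j-1}[yx^{\ell}]]$ with the generalized Jacobi identity to get $\binom{\ell}{j}\equiv 0\pmod{p}$ for $0<j<\ell_2$, which is exactly your congruence $\binom{\ell-1}{a}\equiv(-1)^a\pmod{p}$ after multiplying by $1+X$, and both arguments then combine this with Lemma~\ref{lemma:constituent_bound} and the preliminary fact that $(1+X)^{2n}\equiv 1\pmod{X^n}$ forces $n$ to be a power of $p$. Your deduction of $\ell_2=q$ for odd $p$ from the coefficient $2$ of $X^q$ in $(1+X^q)^2$ is likewise the paper's closing step, phrased there as $\binom{2q}{q}=2$.
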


\begin{proof}
Using the generalized Jacobi identity and Equations~\eqref{eq:first_const} and~\eqref{eq:first_const_end} we find
\begin{equation*}
0=[yx^{j-1}[yx^{\ell}]]=
(-1)^{k-j}\binom{\ell}{j}
[yx^{\ell-1}yx^j].
\end{equation*}
By definition of $\ell_2$ according to Equations~\eqref{eq:const} and~\eqref{eq:const_end}
we have
$[yx^{\ell-1}yx^j]=[v_1yx^j]\neq 0$ for $0<j<\ell_2$, and we infer
$\binom{\ell}{j}\equiv 0\pmod{p}$
in that range.
Because $\ell_2\ge\ell/2$ from Lemma~\ref{lemma:constituent_bound},
we have
$\binom{\ell}{j}\equiv 0\pmod{p}$
for all $0<j<\ell$ except possibly for $j=\ell/2$.
Because $\ell$ is even, we have seen in Section~\ref{sec:preli} how this forces $\ell/2$ to be a power $q$ of $p$, as desired.

As a consequence, $\binom{\ell}{\ell/2}=\binom{2q}{q}=2$, whence $\ell_2\le \ell/2$ if $p$ is odd, and therefore $\ell_2=\ell/2$.
\end{proof}

According to~\cite{Ju:maximal}, in characteristic two
the second constituent of $L$ can take any length which is allowed by the following general result on constituent lengths,
apart from the highest, this restriction being due to Equation~\eqref{eq:2k}.

We now give a revised proof of~\cite[Proposition~5.6]{CMN},
which is the following statement on the possible lengths of arbitrary constituents.
As recalled earlier, beware that constituent lengths as originally defined in~\cite{CMN}
were then all increased by one according to an updated, more natural definition introduced in~\cite{CN}.

\begin{theorem}\label{thm:any_constituent}
Let $L$ be a graded Lie algebra of maximal class with first constituent of length $2q$.
Then the lengths of the constituents can only take the values
\[
2q,\quad\text{and}\quad 2q-p^s,
\quad\text{with $p^s\le q$.}
\]
\end{theorem}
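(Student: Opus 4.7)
The plan is to proceed in parallel with the proof of Theorem~\ref{thm:first_constituent}, now centring the calculation on the $r$-th constituent.  The key input is once more $[y x^\ell]=0$, and the main tool is the generalized Jacobi identity~\eqref{eq:iterated-Jacobi} applied to the trivially vanishing bracket $[v_{r-1}\,y\,x^{j-1}\,[y x^\ell]]=0$.  Using the constituent relations~\eqref{eq:const}--\eqref{eq:const_end}---the inner bracket $[v_{r-1}\,y\,x^{j-1+i}\,y]$ vanishes except when $j-1+i=\ell_r-1$, with value then $[v_r y]$---the Jacobi expansion collapses (in the ``generic'' case when $[v_r x]=0$) to a single surviving term, giving for $1\le j\le\ell_r+\ell_{r+1}-\ell-1$ the condition
\[
\binom{\ell}{\ell_r-j}\,[v_r\,y\,x^{\ell-\ell_r+j}]\equiv 0\pmod p,
\]
equivalently, with $m=\ell_r-j$, $\binom{2q}{m}\equiv 0\pmod p$ for every $m\in[\ell-\ell_{r+1}+1,\,\ell_r-1]$.

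From Theorem~\ref{thm:first_constituent} we have $(X+1)^{2q}=(X^q+1)^2$ in $\F_p[X]$, so for odd $p$ the coefficients $\binom{2q}{m}$ are nonzero modulo $p$ only at $m\in\{0,q,2q\}$.  The range above must therefore avoid $m=q$, giving $\ell_r\le q$ or $\ell_{r+1}\le q$; combined with Lemma~\ref{lemma:constituent_bound} this yields $\ell_r=q$ or $\ell_{r+1}=q$.  The value $\ell_r=q$ already matches $2q-p^s$ with $p^s=q$, so the remaining case is $\ell_r>q$ (whence $\ell_{r\pm 1}=q$), and here the argument must be sharpened.

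The sharpening I have in mind replaces $x^{j-1}$ by an arbitrary polynomial $f(x)$ in the Jacobi input, producing a family of scalar identities indexed by the coefficients of $f$, and carefully tracks the additional surviving terms arising from the possibly nonzero scalars $c_s$ defined by $[v_s x]=c_s[v_s y]$ for $s>1$.  The coincident inner-bracket landings (at $j-1+i=\ell_r-1$ and at $j-1+i=\ell_r+\ell_{r+1}-1$) produce linear relations among the $c_s$---for instance $c_r=2c_{r+1}$ when $\ell_{r+1}=q$ and $p$ is odd---while the remaining constraints sharpen to the polynomial identity $(X+1)^{2q-\ell_r}=X^{2q-\ell_r}+1$ in $\F_p[X]$.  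By the polynomial argument recalled in Section~\ref{sec:preli}, this forces $2q-\ell_r$ to be a power of~$p$, as required.  The hard step will be exactly this sharpening: extracting the clean polynomial-divisibility statement from the Jacobi bookkeeping while controlling the $c_s$, and in parallel handling the case $p=2$, where the dichotomy above is vacuous and one has to start instead from the characteristic-$2$ form of the first-constituent identity.
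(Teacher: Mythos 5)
Your proposal is a sketch whose decisive step is missing. The first half (expanding $[v_{r-1}yx^{j-1}[yx^{\ell}]]=0$ under the extra hypothesis $[v_rx]=0$) is plausible as far as it goes, but it only yields $\binom{2q}{m}\equiv 0 \pmod p$ on an interval, hence merely the dichotomy ``$\ell_r=q$ or $\ell_{r+1}=q$'' for odd $p$ — which does not rule out, say, $\ell_r=2q-3$ with $p=5$, $q=25$, and is vacuous for $p=2$, as you concede. Everything that would actually prove the theorem is deferred to a ``sharpening'' that you describe only in aspirational terms (replace $x^{j-1}$ by a polynomial $f(x)$, track the scalars $c_s$ with $[v_sx]=c_s[v_sy]$, and somehow arrive at $(X+1)^{2q-\ell_r}=X^{2q-\ell_r}+1$). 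No mechanism is given for how the Jacobi bookkeeping would produce that identity, and the complication you are trying to control — extra surviving terms whenever $[v_rx]\neq 0$, because the $x$-chain then continues past $v_r$ into later constituents — is intrinsic to your choice of probe and relation. As it stands this is a research plan, not a proof.

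For comparison, the paper's proof avoids the $[v_rx]$ issue entirely by probing in the opposite direction: for $\ell_r\le j<2q-1$ it picks $u$ \emph{inside the $(r-1)$-th constituent} with $v_{r-1}=[ux^{\,j-\ell_r+1}]$ (possible since $j-\ell_r+1<q\le\ell_{r-1}$, and then $[uy]=0$), and expands $0=[u[yx^jy]]$ using the first-constituent relations rather than $[yx^{\ell}]=0$. Exactly one term survives, giving $\binom{j}{j-\ell_r+1}\equiv 0\pmod p$ on the whole range, and a short binomial manipulation (the Lucas-type reduction $\binom{j}{i}\equiv\binom{j-q}{i}$ together with the reflection identity~\eqref{eq:binomial}) converts this into $\binom{2q-\ell_r}{i}\equiv 0\pmod p$ for $0<i<2q-\ell_r$, whence $2q-\ell_r$ is a power of $p$, uniformly in $p$ (including $p=2$). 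If you want to salvage your route, the honest advice is to switch the probe as the paper does; otherwise you must actually carry out the $c_s$ bookkeeping, which is precisely the hard part you have left open.
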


\begin{proof}
We know from Lemma~\ref{lemma:constituent_bound} and an earlier observation that every constituent length $\ell_r$ satisfies $q\le \ell_r\le 2q$.
Now suppose $\ell_r<2q-1$ for some $r$.
If $\ell_r\le j< 2q-1$, then because $j-\ell_r+1<q\le\ell_{r-1}$ there exists a homogeneous element $u$ such that $v_{r-1}=[ux^{j-\ell_r+1}]$.
(In fact, $u=[v_{r-2}yx^{\ell_{r-1}+\ell_r-2-j}]$, taking into account our convention on $v_0$ when $r=2$.)
Noting that $[uy]=0$ and using the generalized Jacobi identity we find
\[
0=[u[yx^jy]]= [u[yx^j]y]
=(-1)^{j-\ell_r+1}\binom{j}{j-\ell_r+1}[v_{r-1}yx^{\ell_r-1}y].
\]
Because $[v_{r-1}yx^{\ell_r-1}y]=[v_ry]\neq 0$ we infer
$\binom{j}{j-\ell_r+1}\equiv 0\pmod{p}$
for $\ell_r\le j<2q-1$.
However, the value modulo $p$ of that binomial coefficient can be manipulated as follows:
\begin{align*}
\binom{j}{j-\ell_r+1}
&\equiv
\binom{j-q}{j-\ell_r+1}=
\binom{j-q}{\ell_r-q-1}
\\&\equiv
(-1)^{j+\ell_r-1}\binom{2q-\ell_r}{2q-j-1}
\pmod{p}.
\end{align*}
Here we have used Equation~\eqref{eq:binomial} for the last step, and
the following special case of Lucas' theorem for the first step:
$\binom{j+q}{i}\equiv\binom{j}{i}\pmod{p}$
if $q$ is a power of $p$ and $i<q$.
This follows at once from the congruence $(X+1)^{j+q}\equiv(X+1)^j(X^q+1)\pmod{p}$.
In conclusion, we have obtained
$\binom{2q-\ell_r}{i}\equiv 0\pmod{p}$
for $0<i<2q-\ell_r$.
As recalled in Section~\ref{sec:preli} this yields the desired conclusion that $2q-\ell_r$ is a power of $p$.
\end{proof}

\section{The degree of the second diamond in a thin Lie algebra}\label{sec:second_diamond}

The possibilities for the degree $k$ of the second diamond $L_k$ of a thin Lie algebra
were determined in~\cite{AviJur}, extending more specialized results in~\cite{CMNS}
and also building on results of~\cite{CaMa:thin} and~\cite{CaJu:quotients}.
It was proved in~\cite{AviJur} that the second diamond of an infinite-dimensional thin Lie algebra $L$
can only occur in an odd degree $k$ of the form $3$, $5$, $q$, or $2q-1$,
where $q$ is a power of $p$. 
(We reserve the letter $k$ for the degree of the second diamond throughout this section.)
That $3$ and $5$ are the only possibilities in characteristic zero was already known from~\cite{CMNS}.

The conclusions on $k$ of~\cite{AviJur}, as we have just stated them, turned out to be only justified for $p\neq 2$ at the time,
because they depended on previous results in~\cite{CaJu:quotients} for $p\neq 2$, and in~\cite{Ju:quotients} for $p=2$,
and the latter paper was later found to contain errors.
Those results allowed the authors of~\cite{AviJur} to assume that the quotient $L/L^k$ is metabelian,
and their arguments remain valid under that assumption.
(Thin Lie algebras were allowed to be finite-dimensional in~\cite{CaJu:quotients,Ju:quotients,AviJur}, thus providing more precise information,
which we disregard here.)
According to~\cite{CaJu:quotients} the quotient $L/L^k$ is necessarily metabelian when $p\neq 2$.
It need not be when $p=2$, and the paper~\cite{Ju:quotients} claimed a complete structural description of $L/L^k$ which is incorrect.
This was rectified in~\cite{AJM}, which turned out to involve a much more complex argument than the original fallacious one of~\cite{Ju:quotients} but,
in particular, confirmed its original consequence that $k$ must have the form $2q-1$ when $p=2$ and $L/L^k$ is not metabelian.

In this section we give a new and substantially shorter proof of the main result of~\cite{AviJur},
or rather of its consequence for an infinite-dimensional thin Lie algebra $L$,
following the same idea behind our results of the previous section.
According to~\cite{CaJu:quotients} for $p\neq 2$ ,
and to~\cite{AJM} for $p=2$, which amends \cite{Ju:quotients},
we may assume that $L/L^k$ is metabelian, which means $(L^2)^2\subseteq L^k$.
Obviously $L_2=[L_1,L_1]$ cannot be a diamond, but examples in~\cite{CMNS} and~\cite{GMY}
show that $L_3$ can be, hence $k=3$ is a possibility.
Thus, from now on we assume $k>3$, and so $L_3$ is not a diamond.

Assuming $[L_2L_1]$ to be one-dimensional allows us to choose a nonzero $y\in L_1$ such that $[L_2,y]=0$.
We also choose $x\in L_1$ not to be a scalar multiple of $y$, and hence $x$ and $y$ generate $L$.
Thus, we have
\begin{subequations}
\begin{align}
&[yx^iy]=0\quad\text{for $0\le i<k-2$},
\label{eq:yx..y}\\
&[vy]\neq 0,\quad\text{where }
v:=[yx^{k-2}].
\label{eq:yx..}
\end{align}
The above conditions are sufficient to determine $k$ and define $v$, in a similar fashion as Equations~\eqref{eq:const} and~\eqref{eq:const_end}.
A calculation already employed in the previous section, namely, Equation~\eqref{eq:ell_even} but with $k-1$ in place of $\ell$,
shows that $k$ must be odd.
Next, we conveniently set $v:=[yx^{k-2}]$.
Hence $[vx]$ and $[vy]$ are linearly independent and span the diamond $L_k$.

The next homogeneous component $L_{k+1}$ is spanned by $[vxx]$, $[vxy]$, $[vyx]$, and $[vyy]$,
but Equation~\eqref{eq:vyy} with $v$ in place of $v_r$ shows
\begin{align}\label{eq:diamond_vyy}
&[vyy]=0.
\end{align}
Because $[vyL_1]=L_{k+1}$ according to the covering property,
$L_{k+1}$ is spanned by $[vyx]$ and, in particular, is one-dimensional.
Also, computing
\begin{align*}
0&=[yx^{(k-1)/2}\,[yx^{(k-1)/2}]]=
\\
&=(-1)^{(k-3)/2}\binom{(k-1)/2}{(k-3)/2}[vyx]+(-1)^{(k-1)/2}[vxy],
\end{align*}
we find
\begin{align}\label{eq:diamond}
&[vxy]=\frac{k-1}{2}[vyx].
\end{align}
Now if we had $[vxy]=0$ then, because $[vxx]$ is a scalar multiple of $[vyx]$, some nonzero linear combination $z$ of $[vx]$ and $[vy]$
would satisfy $[zx]=0$ and $[zy]=0$, thus violating the covering property.
Consequently, $(k-1)/2$ cannot be a multiple of $p$,
meaning $k\not\equiv 1\pmod{p}$ if $p\neq 2$, and $k\equiv -1\pmod{4}$ if $p=2$.

Note that $[vxx]$ may be zero, or not.
Setting $x'=x+\alpha y$ and computing
\[
[vx'x']=[vxx]+\alpha[vxy]+\alpha[vyx]=[vxx]+\frac{k+1}{2}\alpha[vyx]
\]
we see that by replacing $x$ with $x'$ for a suitable choice of $\alpha$ (which does not affect any of our previous conclusions) we may attain
\begin{align}\label{eq:vxx}
&[vxx]=0\quad\text{if $k\not\equiv -1\pmod{p}$},
\end{align}
which is analogous to Equation~\eqref{eq:first_const_end}.
The thin algebras with second diamond $L_{2q-1}$ and all diamonds of finite type, constructed in~\cite{CaMa:thin},
show that $k\equiv -1\pmod{p}$ is a genuine exception here.
We will make only a marginal use of Equation~\eqref{eq:vxx} in this paper, in ruling out $k=7$ at the end of the proof of Theorem~\ref{thm:first_chain}.

Moving past degree $k+1$, we will show that not all homogeneous components past $L_k$ can be centralized by $y$,
hence we may let an integer $h>1$ be defined by
\begin{align}
&[vyx^iy]=0\quad\text{for $0<i<h-1$},
\label{eq:vyx^iy}\\
&[vyx^{h-1}y]\neq 0.
\label{eq:vyx..y}
\end{align}
\end{subequations}
However, until we prove the existence of $h$ in the proof of Theorem~\ref{thm:first_chain},
for uniformity of notation we set $h=\infty$ in case $[vyx^iy]=0$ for all $i>0$
(with the convention that $\infty$ is larger than any integer).
Note the similarity of these equations to Equations~\eqref{eq:const} and~\eqref{eq:const_end}.
Thus, if $h>1$ then $L_{k+h-1}$ is the next homogeneous component after $L_k$ which is not centralized by $y$.
(It will follow from a discussion at the end of this section that $h=1$ can possibly occur only when $k=3$.)

Now we are ready to establish the following result, which determines all possibilities for $k$.

\begin{theorem}\label{thm:first_chain}
Let $L$ be a thin Lie algebra over a field of arbitrary characteristic $p$ (possibly zero), with second diamond $L_k$, and $L/L^k$ metabelian.
Then $k$ equals one of $3$, $5$, $q$ or $2q-1$
for some power $q$ of $p$ in case $p>0$.
However, if $p=2$ then $k$ equals either $3$ or $2q-1$.
\end{theorem}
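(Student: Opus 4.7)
Here is the plan: mirror the strategy of Section~\ref{sec:constituents} by first establishing the thin-algebra analogue of Lemma~\ref{lemma:constituent_bound}, then extracting a binomial-coefficient congruence in $\F_p[X]$ from a suitable vanishing bracket, and finally invoking Lemma~\ref{lemma:first_chain} to pin down the admissible $k$, disposing of the spurious values by other means.

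I first prove that $h\ge n:=(k-1)/2$, in the spirit of Lemma~\ref{lemma:constituent_bound}. Equation~\eqref{eq:yx..y} and the generalized Jacobi identity yield the diagonal vanishing $[yx^j[yx^{j-1}]]=0$ for $0<j<n$. Setting $w:=[yx^{k-2-j}]$, routine generalized-Jacobi expansions give $[w[yx^{j-1}]]=0$ while $[w[yx^j]]=(-1)^j[vy]$, so bracketing $w$ against the diagonal vanishing and using Jacobi produces $[vy[yx^{j-1}]]=0$ for $0<j<n$. A final expansion of $[vy[yx^m]]$ by generalized Jacobi, using $[vyy]=0$ together with the defining relations for $h$, forces $h\ge n$: otherwise taking $m=h-1$ would give $0=[vy[yx^{h-1}]]=(-1)^{h-1}[vyx^{h-1}y]\ne 0$.

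The core step is the uniform claim that $[yx^a[yx^k]]=0$ for $0\le a\le h-3$. Since $L_{k+1}$ is one-dimensional, $[vxx]=c[vyx]$ for some scalar $c$; thus $[yx^k]=[vxx]$ is proportional to $[vyx]$, and it suffices to prove $[yx^a[vyx]]=0$. Two applications of the Jacobi identity reduce this to a sum of $[vy[yx^{a+1}]]$ and $[[yx^a,v],y]$, each of which a generalized-Jacobi expansion shows to vanish in the given range of $a$, using $[yx^iy]=0$ for $i<k-2$, $[vyy]=0$, $[vyx^iy]=0$ for $0<i<h-1$, and $[vx^m]=c[vyx^{m-1}]$ for $m\ge 2$. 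Expanding $0=[yx^a[yx^k]]$ by the generalized Jacobi identity and discarding by the same reductions all contributions from $m\ge 2$ in the reparametrization $i=k-2-a+m$, only $m=0$ and $m=1$ survive; Equation~\eqref{eq:diamond} together with the nonvanishing of $[vyx^{a+2}]$ (which holds for $a+2\le h-1$) gives
\[
\binom{k}{a+2}\equiv \tfrac{k-1}{2}\binom{k}{a+1}\pmod{p}
\qquad\text{for } 0\le a\le h-3.
\]
Setting $j=a+1$ and combining with $h\ge n$, this is precisely Equation~\eqref{eq:congr_2n+1} with $2n+1=k$, so Lemma~\ref{lemma:first_chain} yields $k\in\{3,5,7,q,2q-1,2q+1\}$.

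It remains to exclude the spurious values, in the spirit of Remark~\ref{rem:neater}. The value $k=2q+1$ is ruled out since $(k-1)/2=q\equiv 0\pmod{p}$ would violate the covering-property consequence noted after Equation~\eqref{eq:diamond}. For $p=2$, $k$ odd excludes $k=q$ (which would be even) and the refined covering constraint $k\equiv -1\pmod{4}$ excludes $k=5$, leaving $\{3\}\cup\{2q-1:q\text{ a power of }2\}$ (including $7=2\cdot 4-1$). For $p\neq 2,7$ the value $k=7$ will be excluded ad hoc: Equation~\eqref{eq:vxx} applies since $7\not\equiv -1\pmod{p}$, so $[vxx]=0$ and hence $[yx^k]=0$, allowing the congruence to be derived for all $a\ge 0$; taking $a=1$ then yields $\binom{7}{3}\equiv 3\binom{7}{2}\pmod{p}$, i.e.\ $28\equiv 0\pmod{p}$, forcing $p\in\{2,7\}$. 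The main obstacle is the uniform form of the core step: since $k\equiv -1\pmod{p}$ holds necessarily when $p=2$, the shortcut $[vxx]=0$ with $[yx^k]=0$ is unavailable there, and the delicate Jacobi-based cancellations described above are what let the argument go through in all characteristics.
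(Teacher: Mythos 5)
Your proposal follows the paper's proof almost step for step: the characteristic-free lower bound $h\ge(k-1)/2$, the derivation of the congruence $\binom{k}{j+1}\equiv\frac{k-1}{2}\binom{k}{j}\pmod{p}$ for $0<j<h-1$ from the expansion of $[yx^{j-1}[yx^k]]$, the appeal to Lemma~\ref{lemma:first_chain} with $2n+1=k$, the exclusion of $2q+1$ via the fact that $(k-1)/2$ cannot be a multiple of $p$, and the treatment of $p=2$ are all exactly the paper's steps. The one place where you deviate is the ad-hoc exclusion of $k=7$ for $p\neq 2,7$, and there your shortcut has a genuine gap.

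You claim that once $[vxx]=0$ is attained via Equation~\eqref{eq:vxx}, the congruence ``can be derived for all $a\ge 0$'', and you evaluate it at $a=1$ to get $28\equiv 0\pmod{p}$. But making $[yx^k]=[vxx]$ vanish removes only one of the two hypotheses needed to extract the congruence at a given index: the expansion of $0=[yx^{a}[yx^k]]$ still yields only
\[
\left(\binom{k}{a+2}-\frac{k-1}{2}\binom{k}{a+1}\right)[vyx^{a+2}]=0,
\]
and to pass from this to a congruence between binomial coefficients you need $[vyx^{a+2}]\neq 0$ --- a condition you yourself correctly record earlier as requiring $a+2\le h-1$. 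For $k=7$ the lower bound gives only $h\ge 3$, so at $a=1$ you would need $h\ge 4$, which is not known; if $h=3$ then $L_{k+3}$ is spanned by $[vyx^{2}y]$ and $[vyx^{3}]$, and nothing prevents $[vyx^{3}]=0$, in which case your identity at $a=1$ is vacuous. Worse, for $p\neq 2,7$ the failure of the congruence at $j=2$ is precisely what forces $h=3$, so you are necessarily in the case your argument does not cover. The paper closes this case by a separate computation: from $h\ge 3$ and $h=3$ one gets $[vyxy]=0$ and $[vyxxy]\neq 0$, which via Equation~\eqref{eq:diamond} translate into $[vxyy]=0$ and $[vxyxy]\neq 0$; then $0=[vx[xyy]]=[vxxyy]-2[vxyxy]$ forces $[vxxyy]\neq 0$, contradicting $[vxx]=0$. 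Some such supplementary argument is needed; as written, $k=7$ is not excluded.
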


Our proof of Theorem~\ref{thm:first_chain} is very similar to that of Theorem~\ref{thm:first_constituent},
but includes preparations analogous to Lemma~\ref{lemma:constituent_bound}.

\begin{proof}
We assume $k>3$ as we may, so we can make full use of the equations found above.
First we need a lower bound on $h$ in terms of $k$, which is analogous to Lemma~\ref{lemma:constituent_bound} and is proved by
the same characteristic-free argument.
Thus, as in the proof of Lemma~\ref{lemma:constituent_bound} one finds
$[yx^{i-1}[yx^{j-1}]]=0$
for $i,j>0$ with $i+j\le k-1$.
In particular, we have
\[
[yx^j[yx^{j-1}]]=0
\quad\text{for $0<j<(k-1)/2$}.
\]
It follows that
\begin{align*}
0
&=
[yx^{k-j-2}\,[yx^j\,[yx^{j-1}]]]
\\&=
[yx^{k-j-2}\,[yx^j]\,[yx^{j-1}]]
-[yx^{k-j-2}\,[yx^{j-1}]\,[yx^j]]
\\&=
(-1)^j[vy\,[yx^{j-1}]],
\end{align*}
again for $0<j<(k-1)/2$.
Because
$
[vy\,[yx^{h-1}]]=(-1)^{h-1}[vyx^{h-1}y]\neq 0
$
we conclude $h\ge (k-1)/2$.

The rest of the proof is similar in structure to the proof of Theorem~\ref{thm:first_constituent},
with $[yx^k]$ playing a similar role to $[yx^{\ell}]$ there.
Although the vanishing of $[yx^k]=[vxx]$
is generally not available to us (Equation~\eqref{eq:vxx}), we still have
\[
[yx^k\,[yx^{j-1}]]=
[vxx\,[yx^{j-1}]]=
\sum_{i=0}^{j-1}(-1)^i\binom{j-1}{i}[vx^{2+i}yx^{j-1-i}]=0
\]
for $0<j<h-1$.
We deduce
\begin{align*}
0&=[yx^{j-1}\,[yx^k]]
\\&=
(-1)^{k-j-1}\binom{k}{j+1}
[vyxx^j]
+(-1)^{k-j}\binom{k}{j}
[vxyx^j]
\\&=(-1)^{j}\left(\binom{k}{j+1}
-\frac{k-1}{2}\binom{k}{j}\right)
[vyxx^j]
\end{align*}
for $0<j<h-1$.
Because $[vyxx^j]\neq 0$ in that range due to Equation~\eqref{eq:vyx..y}, we obtain
\begin{equation}\label{eq:binom_k}
\binom{k}{j+1}\equiv \frac{k-1}{2}\binom{k}{j}\pmod{p}
\quad\text{for $0<j<h-1$},
\end{equation}
which is to be read as an equality when $p=0$.
Because $\binom{k}{k-1}=k$, $\binom{k}{k}=1$, and $\binom{k}{k+1}=0$, the congruence in Equation~\eqref{eq:binom_k} cannot hold for both $j=k-1$ and $j=k$,
hence $h$ is finite and $h\le k+1$.

Because  $h\ge(k-1)/2$ the congruence in Equation~\eqref{eq:binom_k} holds, in particular, for $0<j<(k-3)/2$.
Now Lemma~\ref{lemma:first_chain} with $2n+1=k$, together with our restriction that $(k-1)/2$ is not a multiple of $p$
(as a consequence of Equation~\eqref{eq:diamond}),
shows that $k$ can only take one of the claimed values, or possibly $7$.

Of course $k=7$ is a possibility when $p=2$ (having the form $2q-1$) and when $p=7$ (having the form $q$).
To exclude $k=7$ for $p\neq 2,7$ we may argue as follows.
If $k=7$ then we have found above $h\ge 3$, whence $[vyxy]=0$,
but if $p\neq 2,7$ then Equation~\eqref{eq:binom_k} fails for $j=2$, and so $h=3$, whence $[vyxxy]\neq 0$.
Since $[vxy]=3[vyx]$ according to Equation~\eqref{eq:diamond}, and $p\neq 3$ because $(k-1)/2$ cannot be a multiple of $p$,
those two conditions may be equivalently written as $[vxyy]=0$ and $[vxyxy]\neq 0$.
We then find
$0=[vx[xyy]]=[vxxyy]-2[vxyxy]$,
and hence $[vxxyy]\neq 0$.
However, this contradicts the fact that $[vxx]=0$ may be attained after suitably redefining $x$, according to Equation~\eqref{eq:vxx}.
\end{proof}

In a similar fashion as how the proof of Theorem~\ref{thm:first_constituent} determined the length of the second constituent for $p$ odd,
our proof of Theorem~\ref{thm:first_chain} provides partial information on the quantity $h$ defined by Equations~\eqref{eq:vyx^iy} and~\eqref{eq:vyx..y}.
In fact, in the course of the proof of Theorem~\ref{thm:first_chain}
we have shown $(k-1)/2\le h\le k+1$.
The upper bound was used to ensure the finiteness of $h$, but can be refined
by a direct application of Equation~\eqref{eq:binom_k} in each case,
of which we give a sample here, referring to results from other papers for more complete conclusions.

If $k=2q-1$ and $p\neq 2$ then, because $\binom{2q-1}{q-1}=\binom{2q-1}{q}=1$, Equation~\eqref{eq:binom_k} fails for $j=q-1$.
Hence $h\le q$, and together with the inequality $(k-1)/2\le h$ we find that $h$ equals either $q-1$ or $q$.
However, taking $h=q$ contradicts Equation~\eqref{eq:vyx..y} by computing
\begin{multline*}
0=[v[yx^{q-1}y]]
=[v[yx^{q-1}]y]-[vy[yx^{q-1}]]
\\=
[vyx^{q-1}y]-(q-1)[vxyx^{q-2}y]-[vyx^{q-1}y]
=
-[vyx^{q-1}y],
\end{multline*}
where we have expanded the Lie products using Equation~\eqref{eq:vyx^iy} and the fact that $[vxx]$ a scalar multiple of $[vyx]$.
In conclusion, if $k=2q-1$ and $p\neq 2$ we have shown $h=q-1$.
One can also prove that $\dim(L_{k+h})=2$ in this case, and hence $L_{3q-2}$ is {\em the third diamond} of $L$, see~\cite{CaMa:thin}.

If $k=2q-1$ and $p=2$, then Equation~\eqref{eq:binom_k} fails the first time for $j=2q-1$,
and hence our arguments so far only show $q-1\le h\le 2q$.
In fact, the case of characteristic two is genuinely more complicated.
A connection with certain graded Lie algebras of maximal class established in~\cite[Section~5]{CaMa:thin}
produces examples of thin Lie algebras with $k=2q-1$ that, in characteristic two, attain any value $h=2q-2^s-1$ with $2^s\le q$.

If $k=q$, then Equation~\eqref{eq:binom_k} fails for $j=q-1$, and hence $(q-1)/2\le h\le q$.
This is a poor conclusion, as for $p>3$ and $q>5$ it is shown in~\cite{CaMa:Nottingham} that $h$ can only take the values $q-1$ or $q$.
The former value occurs, in particular, when $L_{2q-1}$ is the third diamond of $L$.
However, it is natural to call {\em fake diamonds} certain one-dimensional components of thin Lie algebras,
and here the third diamond can be fake, see~\cite{CaMa:Nottingham}.
When $L_{2q-1}$ is a fake diamond of one particular type, $h$ takes the other value $q$.
Thin Lie algebras with $k=q$ have been named {\em Nottingham algebras} because a special case is related to the {\em Nottingham group},
see~\cite{Car:Nottingham,AviMat:A-Z,AviMat:Nottingham_structure}
for a variety of examples and structural results.
In particular, it is shown in~\cite[Section~3]{AviMat:Nottingham_structure} that, when properly interpreted in case of fake diamonds,
the degree difference between any two consecutive diamonds in a Nottingham algebra equals $q-1$.

If $k=5\neq p$, then Equation~\eqref{eq:binom_k} fails for $j=2$, and hence $2\le h\le 3$.
However, $h=3$ can be ruled out by a similar application of~\eqref{eq:yx..y} as we did above to exclude $h=q$ in case $k=2q-1$,
see also~\cite{CMNS}.
For $p\neq 2,3$ as well, it was proved in~\cite{CMNS} that $L$ is then uniquely determined, and has diamonds in all degrees congruent to $\pm 1$ modulo $6$.

For completeness we briefly discuss the case $k=3$, which is not covered by our arguments above.
If $p>3$ then arguments in~\cite{CMNS} show that $h$ equals $1$ or $2$.
Both possibilities occur.
In fact, the two thin Lie algebras with second diamond $L_3$ constructed in~\cite[Section ~4]{CMNS} have a diamond in each odd degree.
In particular, the next diamond after $L_3$ is $L_5$, whence $h=2$ in those cases.
However, for the metabelian thin Lie algebras constructed in~\cite{GMY} every homogeneous component except for $L_2$ is a diamond,
and hence $h=1$ for those.

\bibliography{References}

\end{document}